\newtheorem{sat}{Theorem}[section]		\newtheorem{lem}[sat]{Lemma}
\newtheorem{kor}[sat]{Corollary}			\newtheorem{prop}[sat]{Proposition}
				\newtheorem{defi}{Definition}
\newtheorem*{defi*}{Definition}			\newtheorem*{bei*}{Example}
\newtheorem*{sat*}{Theorem}				\newtheorem*{kor*}{Corollary}
\newtheorem*{rmk*}{Remark}					
\let\ssection=\section
\renewcommand{\section}{\setcounter{equation}{0}\ssection}
\newtheorem*{namedtheorem}{\theoremname}
\newcommand{\theoremname}{testing}
\theoremstyle{remark}
\newcommand{\BC}{\mathbb C}
\newcommand{\BS}{\mathbb S}			\newcommand{\BZ}{\mathbb Z}
\newcommand{\BONE}{\mathds 1}
		\newcommand{\CH}{\mathcal H}
\newcommand{\actson}{\curvearrowright}
\DeclareMathOperator{\SL}{SL}		
\DeclareMathOperator{\Hom}{Hom}		
\DeclareMathOperator{\SU}{SU}
\DeclareMathOperator{\Ker}{Ker}
\newcommand{\comment}[1]{}
\DeclareMathOperator{\SO}{SO}
\DeclareMathOperator{\Spin}{Spin}
\DeclareMathOperator{\U}{U}
\DeclareMathOperator{\Sp}{Sp}
\newcommand{\x}{{\bf x}}
\newcommand{\y}{{\bf y}}
\begin{document}

\title[On the fundamental group of $\Hom(\BZ^k,G)$]{On the fundamental group of $\bf{\text{Hom}({\mathbb{\BZ}}^k,G)}$}
\author[J. Manuel G\'omez, A. Pettet, J. Souto]{Jos\'e Manuel G\'omez , Alexandra Pettet, and Juan Souto}
\thanks{The second author has been partially supported by NSF grant DMS-0856143 and NSF RTG grant DMS-0602191. The third author has been partially supported by the NSF grant DMS-0706878, NSF Career award 0952106 and the Alfred P. Sloan Foundation}

\begin{abstract}
Let $G$ be a compact Lie group. Consider the variety $\Hom(\BZ^{k},G)$ of representations of $\BZ^k$ into $G$. We can see this as a based space 
by taking as base point the trivial representation $\BONE$. The goal of this paper is to prove that $\pi_1(\Hom(\BZ^k,G))$ is naturally isomorphic to $\pi_1(G)^k$.
\end{abstract}
\maketitle

\section{Introduction}
Let $G$ be a compact Lie group. The set $\Hom(\BZ^k,G)$ can naturally be identified with the subset of $G^{k}$ consisting 
of ordered commuting $k$-tuples in $G$. In this way, $\Hom(\BZ^k,G)$ can be given a topology as a subspace of $G^{k}$ making it into a, possibly singular, real analytic variety. Let $\BONE\in \Hom(\BZ^k,G)$ be the trivial representation. Then $\Hom(\BZ^k,G)$ can be seen as a based space with base point $\BONE$. As announced in the abstract, the goal of this paper is to prove the following result:

\begin{sat}\label{main}
Let $G$ be a compact Lie group. Then for every $k\ge 1$ there is a natural isomorphism 
$$\pi_1(\Hom(\BZ^k,G))\cong\pi_1(G)^k.$$
\end{sat}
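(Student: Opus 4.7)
The natural candidate for the isomorphism is $\iota_*:\pi_1(\Hom(\BZ^k,G))\to\pi_1(G)^k$ induced by the inclusion $\iota:\Hom(\BZ^k,G)\hookrightarrow G^k$ sending $\rho$ to $(\rho(e_1),\dots,\rho(e_k))$. The plan is to prove $\iota_*$ is both surjective and injective. Surjectivity follows from the $k$ natural sections $\sigma_j:G\hookrightarrow\Hom(\BZ^k,G)$ sending $g$ to the tuple with $g$ in the $j$-th coordinate and $1$ elsewhere: each $\iota\circ\sigma_j$ is the inclusion of the $j$-th factor of $G^k$, so the subgroups $(\sigma_j)_*\pi_1(G)$ collectively surject onto $\pi_1(G)^k$. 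Since the component of $\BONE$ in $\Hom(\BZ^k,G)$ contains only tuples with entries in the identity component $G^0$, we may assume throughout that $G$ is connected.

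For injectivity, the central step is to show that every loop $\gamma$ in $\Hom(\BZ^k,G)$ based at $\BONE$ is homotopic, through loops in $\Hom(\BZ^k,G)$, to a loop contained in $T^k\subset\Hom(\BZ^k,G)$ for a fixed maximal torus $T\subset G$. Granting this, the induced map $\pi_1(T)^k\to\pi_1(\Hom(\BZ^k,G))$ is surjective, and its composition with $\iota_*$ is the standard surjection $\pi_1(T)^k\twoheadrightarrow\pi_1(G)^k$ whose kernel is $K^k$ with $K=\ker(\pi_1(T)\to\pi_1(G))$, generated by coroot circles. A loop representing a coroot in the $j$-th coordinate (with all other coordinates constantly equal to $1$) lies in the image of $\sigma_j$, and since such a loop is null-homotopic in $G$ by definition of $K$, it is null-homotopic in $\sigma_j(G)\subset\Hom(\BZ^k,G)$. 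Hence $K^k$ maps to zero, so $\pi_1(T)^k\to\pi_1(\Hom(\BZ^k,G))$ descends to a surjection $s:\pi_1(G)^k\twoheadrightarrow\pi_1(\Hom(\BZ^k,G))$. Since $\iota_*\circ s$ is the identity on $\pi_1(G)^k$, both $s$ and $\iota_*$ are isomorphisms.

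The main obstacle is the deformation claim above. Given $\gamma(t)=(g_1(t),\dots,g_k(t))$, a loop of commuting tuples in the identity component of $\Hom(\BZ^k,G)$, the set of maximal tori simultaneously containing the $g_i(t)$ is nonempty for each $t$ but varies in the flag variety $G/N_G(T)$. Locally one can produce a continuous $h(t)\in G$ with $h(t)^{-1}g_i(t)h(t)\in T$ for all $i$, but globally along the loop there is a Weyl-group-type monodromy in $\pi_1(G/N_G(T))$ obstructing a coherent choice. Overcoming this—exploiting that the basepoint $\BONE$ has centralizer all of $G$, and subdividing $\gamma$ while using the natural stratification of $\Hom(\BZ^k,G)$ by centralizer type to reduce inductively to smaller subgroups—is the crucial technical step. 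A complementary difficulty is verifying that after this reduction the homotopy of the loop into $T^k$ actually lies in $\Hom(\BZ^k,G)$ throughout, not merely in $G^k$.
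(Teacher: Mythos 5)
Your overall strategy is sound, and in one respect it is cleverer than the paper's: you observe that for injectivity of $\iota_*$ it suffices to kill only the \emph{coroot} loops in $\pi_1(T)^k$, and these die in $\Hom(\BZ^k,G)$ under the sections $\sigma_j$ precisely because they already die in $G$ --- and this works for \emph{any} compact connected $G$, not just simply connected ones. If carried out, this would shortcut the paper's two-step reduction (prove the theorem for simply connected $G$, then lift to a product cover $\tilde G = (\BS^1)^r\times H \to G$, apply Goldman's covering-space lemma, and run the five lemma). The paper uses the full force of $\pi_1(G)=1$ to kill \emph{every} single-slot loop in $\pi_1(T^k)$ and then treats the general case separately; your observation that only the coroot classes need to be killed makes the covering-space section unnecessary.

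However, there is a genuine gap at exactly the step you flag as "the crucial technical step": the claim that every loop in $\Hom(\BZ^k,G)_\BONE$ based at $\BONE$ is homotopic through $\Hom(\BZ^k,G)$ to a loop in $T^k$ (equivalently, that the inclusion $T^k\hookrightarrow\Hom(\BZ^k,G)_\BONE$ is $\pi_1$-surjective). You correctly diagnose the monodromy in $G/N(T)$ as the obstacle, but the proposed remedy --- subdividing $\gamma$ and inducting over a stratification by centralizer type, reducing to smaller subgroups --- is a heuristic, not an argument: a locally defined conjugator $h(t)$ does not obviously patch into a global one, and it is not explained how passing to centralizer subgroups resolves, rather than relocates, the monodromy. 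The paper's proof of this surjectivity is different and is where the real work happens. It works on the manifold $M:=G\times_{N(T)}T^k$ via the surjection $\sigma_k:M\to\Hom(\BZ^k,G)_\BONE$, using Baird's results that $\sigma_k$ is onto the $\BONE$-component with connected fibers, that $\sigma_k$ is a homeomorphism over the regular locus, and that the singular locus has codimension at least $k\ge 2$ in $M$. Combined with a Whitney stratification of the real-analytic variety $\Hom(\BZ^k,G)_\BONE$ and an elementary general-position lemma, this pushes any loop off the singular part and shows $\pi_1(\sigma_k)$ is surjective; then the split fibration $T^k\to M\to G/N(T)$ together with the fact that $\sigma_k$ collapses the section to $\BONE$ gives exactly the surjectivity of $\pi_1(T^k)\to\pi_1(\Hom(\BZ^k,G)_\BONE)$ that you need. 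Without an argument of comparable substance for that claim, the proposal does not yet constitute a proof, though the surrounding plan is correct and would close the gap cleanly once that surjectivity is established.
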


Theorem \ref{main} is due to Torres-Giese and Sjerve \cite{Enrique} in the case that $G$ is either $\SO(3),\SU(2)$ or $\U(2)$. In their work, Torres-Giese and Sjerve determine the topological type of $\Hom(\BZ^k,G)$ and compute its fundamental group via the Seifert-van Kampen Theorem. 
Our approach is as follows. Let $G$ be a compact Lie group and denote by $G_{0}$ the connected component of $G$ containing the unit $1_{G}$. The natural inclusion $i:G_{0}\hookrightarrow G$ gives rise to a map 
\[
\Hom(\BZ^{k},G_{0})\stackrel{i_{*}}{\rightarrow}\Hom(\BZ^{k},G)
\]
that induces an isomorphism of fundamental groups. Therefore, we can assume  without loss of generality that $G$ is a compact connected Lie group. 
Observe that in general $\Hom(\BZ^k,G)$ is not connected even if $G$ is connected and simply connected. We denote by $\Hom(\BZ^k,G)_\BONE$ the connected component of $\Hom(\BZ^k,G)$ containing the trivial representation $\BONE$. We are thus interested in computing $\pi_{1}(\Hom(\BZ^k,G)_\BONE)$.
Fix $T$ a maximal torus in $G$, let $N(T)$ be the normalizer of $T$ in $G$ and $W=N(T)/T$ the associated Weyl group. Following Baird \cite{Baird}, we consider the continuous surjection 
\begin{align*}
\sigma_{k}:G/T\times_{W}T^{k}=G\times_{N(T)}T^k&\to\Hom(\BZ^k,G)_\BONE\\
[(g,t_{1},\dots,t_{k})]&\mapsto (gt_{1}g^{-1},\dots,gt_{k}g^{-1}).
\end{align*}
When $k=1$ this map corresponds to the classical map given by conjugation
\begin{align*}
\sigma_{1}:G/T\times_{W} T&\to G\\
([g],t)&\mapsto gtg^{-1}.
\end{align*}
If $G^{reg}$ denotes the subspace of regular elements in $G$, then Weyl's covering theorem (see \cite[Theorem 3.7.2]{Duistermaat}) asserts that the restriction of $\sigma_{1}$ to $G/T\times_{W}(G^{reg}\cap T)$ 
is a $G$-equivariant real-analytic diffeomorphism onto $G^{reg}$. An analogous result is true in general for $k\ge 2$. The map $\sigma_{k}$ is the main tool that we will use to compute $\pi_{1}(\Hom(\BZ^k,G)_\BONE)$. Using a general position argument we show that the map $\sigma_{k}$ is $\pi_1$-surjective. Then, under the additional assumption that $G$ is simply connected, we show that every element in a suitable generating set of $\pi_1(G\times_{N(T)} T^k)$ is in the kernel of $\pi_1(\sigma_{k})$. At this point we will have proved Theorem \ref{main} in the case that $\pi_1(G)$ is trivial. To finish, we reduce the general case to the simply connected case by passing to a suitable cover $\tilde G$ of the group $G$ and studying the relation between $\Hom(\BZ^k,G)_\BONE$ and $\Hom(\BZ^k,\tilde G)_\BONE$.
\medskip

In the course of the proof of Theorem \ref{main} we will need in a key way that $G$ is compact because otherwise the map $\sigma_k$ above will fail to be surjective. However, we would like to mention that the two last authors of this note have proved in \cite{retract} that the inclusion of $\Hom(\BZ^k,\SU(n))$ into $\Hom(\BZ^k,\SL(n,\BC))$ is a homotopy equivalence. Since $\Hom(\BZ^k,\SU(n))$ is connected \cite{KS} and $\SU(n)$ is simply connected for $n\ge 2$, we deduce from Theorem \ref{main}:

\begin{kor}
$\Hom(\BZ^k,\SL(n,\BC))$ is connected and simply connected for all $k\ge 1$ and all $n\ge 2$.\qed
\end{kor}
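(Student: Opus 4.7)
The plan is essentially to assemble the three inputs that are already lined up in the paragraph immediately preceding the corollary: Theorem \ref{main}, the homotopy equivalence from \cite{retract}, and the connectedness result from \cite{KS}. There is nothing new to prove beyond chasing the implications, so I would just lay out the logical chain carefully.

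First I would invoke \cite{retract}, which asserts that the inclusion $\Hom(\BZ^k,\SU(n))\hookrightarrow\Hom(\BZ^k,\SL(n,\BC))$ is a homotopy equivalence. In particular both spaces have the same $\pi_0$ and the same $\pi_1$ at the basepoint $\BONE$, so it suffices to prove that $\Hom(\BZ^k,\SU(n))$ is connected and simply connected. Connectedness is exactly the result of \cite{KS}. For the fundamental group, I would apply Theorem \ref{main} with $G=\SU(n)$, noting that since $\Hom(\BZ^k,\SU(n))$ is connected the distinction between $\Hom(\BZ^k,\SU(n))$ and its $\BONE$-component disappears; this gives
\[
\pi_1(\Hom(\BZ^k,\SU(n)))\cong\pi_1(\SU(n))^k.
\]
Since $\SU(n)$ is simply connected for $n\ge 2$, the right hand side is trivial, and transporting back through the homotopy equivalence of \cite{retract} finishes the proof.

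There is no real obstacle here, as the proof is purely a combination of cited theorems, with Theorem \ref{main} doing the main work. The only point that requires even a moment's care is to check that the hypothesis of Theorem \ref{main} is met, namely that $\SU(n)$ is a compact (connected) Lie group, and that $\Hom(\BZ^k,\SU(n))$ being connected allows us to identify the space with its $\BONE$-component so that Theorem \ref{main} computes $\pi_1$ of the whole variety and not merely of a component.
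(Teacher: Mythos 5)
Your proposal matches the paper's argument exactly: the corollary is deduced from the homotopy equivalence of \cite{retract}, connectedness of $\Hom(\BZ^k,\SU(n))$ from \cite{KS}, and Theorem \ref{main} applied to the simply connected group $\SU(n)$. The only addition you make is to spell out the (correct and worthwhile) observation that connectedness lets one identify $\Hom(\BZ^k,\SU(n))$ with its $\BONE$-component so Theorem \ref{main} applies to the whole space.
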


This paper is organized as follows. In section \ref{sec-simply} we prove Theorem \ref{main} for simply connected groups. In section \ref{sec-not} we extend this to general compact Lie groups. Finally in section \ref{examples} we discuss some examples showing that Theorem \ref{main} fails if the base point of $\Hom(\BZ^{k},G)$ is no longer assumed to be in $\Hom(\BZ^{k},G)_{\BONE}$.

\medskip

\noindent{\bf Acknowledgments:} We would like to thank Mladen Bestvina, Alejandro \'Adem and Enrique Torres-Giese for very pleasant and informative conversations.
\medskip

\noindent{\bf{Remark:}}
The space $\Hom(\BZ,G)$ is naturally homeomorphic to $G$; hence, Theorem \ref{main} is trivially satisfied for $k=1$. Therefore, we will assume from now on that $k\ge 2$. Also, note that Theorem \ref{main} holds trivially for finite groups. Thus we can also assume that $G$ has rank at least $1$.

\section{The simply connected case}\label{sec-simply}
In this section we prove Theorem \ref{main} for the particular case where $G$ is a simply connected Lie group. 

\medskip

From now on fix a compact connected Lie group $G$ and $T$ a maximal torus in $G$. Let $N(T)$ be the normalizer of $T$ in $G$ and denote by 
$W=N(T)/T$ the Weyl group associated to $T$. To $g\in G$ and $t_1,\dots,t_k\in T$ we can associate the representation 
\begin{align*}
\rho_{(g,t_1,\dots,t_k)}:\BZ^k&\to G\\
(n_1,\dots,n_k)&\mapsto gt_1^{n_1}\dots t_k^{n_k}g^{-1}.
\end{align*}
This way we obtain a continuous map
\begin{align*}
\tilde{\sigma}_{k}:G\times T^k&\to\Hom(\BZ^k,G)\\
(g,t_{1},\dots,t_{k})&\mapsto \rho_{(g,t_1,\dots,t_k)}
\end{align*}
which is constant along the orbits of the diagonal action of $N(T)$ onto $G\times T^k$. Thus we have an induced map 
$$\sigma_{k}:G\times_{N(T)} T^k\to\Hom(\BZ^k,G).$$
Observe that 
\[
G\times_{N(T)} T^k=G/T\times_{W}T^{k}
\] 
is a real-analytic manifold and that the map $\sigma_{k}$ is a morphism of real-analytic spaces. Moreover, since $W$ acts freely on 
$G/T$ then the projection onto the first factor induces a fibration sequence of the form 
\[
T^{k}\to G/T\times_{W}T^{k}\to G/N(T).
\]
Let $\Hom(\BZ^k,G)_\BONE$ be the connected component of $\Hom(\BZ^k,G)$ containing the trivial representation
\begin{align*}
\BONE:\BZ^k&\to G\\ 
(n_1,\dots,n_k)&\mapsto 1_G.
\end{align*}
In \cite{Baird}, Baird studied properties of the map $\sigma_{k}$. For instance, by \cite[Lemma 4.2]{Baird} the map $\sigma_{k}$ is a surjection onto $\Hom(\BZ^k,G)_\BONE$, and this space is precisely the subspace of $\Hom(\BZ^k,G)$ consisting of commuting $k$-tuples contained in some maximal torus of $G$. Also, by \cite[Theorem 4.3]{Baird}, the fibers of $\sigma_{k}$ have the cohomology of a point if one has for example coefficients over a field of characteristic $0$. From this we deduce in particular that the fibers of $\sigma_{k}$ are connected. These facts are summarized in the following proposition

\begin{prop}\label{blabla}
The space $\Hom(\BZ^{k},G)_{\BONE}$ is precisely the subspace of $\Hom(\BZ^k,G)$ of commuting $k$-tuples contained in some maximal torus of $G$,
and the map $\sigma_{k}:G\times_{N(T)} T^k\to\Hom(\BZ^k,G)_\BONE$ is surjective and has connected fibers.
\end{prop}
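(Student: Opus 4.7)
The plan is to verify the three assertions by combining simple observations with the two Baird results cited. The image of $\sigma_k$ consists of $k$-tuples $(gt_1g^{-1},\dots,gt_kg^{-1})$ with $g\in G$ and $t_i\in T$; since every maximal torus of $G$ is conjugate to $T$, this image is exactly the set $M$ of commuting $k$-tuples simultaneously contained in some maximal torus of $G$. Since $G$ and $T^k$ are connected, so is $G\times_{N(T)}T^k$, whence $M=\sigma_k(G\times_{N(T)}T^k)$ is a connected subset of $\Hom(\BZ^k,G)$ containing $\BONE$ and therefore $M\subset\Hom(\BZ^k,G)_\BONE$.

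The main point, and principal obstacle, is the reverse inclusion $\Hom(\BZ^k,G)_\BONE\subset M$, which is the content of Baird's Lemma 4.2. I would establish it by showing that $M$ is both closed and open in $\Hom(\BZ^k,G)$: then $M\cap\Hom(\BZ^k,G)_\BONE$ is a nonempty clopen subset of the connected space $\Hom(\BZ^k,G)_\BONE$ and hence equals it. Closedness is immediate, since $G\times_{N(T)}T^k$ is compact and $\sigma_k$ is continuous. Openness is the delicate analytic step: near a point $(a_1,\dots,a_k)\in M$, after conjugating so that each $a_i\in T$, one considers the common centralizer $H=Z_G(a_1,\dots,a_k)$, a compact subgroup of $G$ whose identity component $H_0$ contains $T$ as a maximal torus. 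Via the exponential map of $H$, any commuting perturbation of the tuple is realized inside $H$, and a commuting tuple in $H$ sufficiently close to the given one lies in $H_0$ and hence in some maximal torus of $H_0$, which is also a maximal torus of $G$.

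Once $\sigma_k$ is known to surject onto $\Hom(\BZ^k,G)_\BONE$, connectedness of the fibers is the remaining claim. The fiber of $\sigma_k$ over a tuple $(a_1,\dots,a_k)$ which has been conjugated into $T^k$ admits a description as a quotient built from the centralizer of the tuple, its stabilizer in $W$, and $N(T)$; Baird's Theorem 4.3 computes its rational cohomology to be that of a point, so in particular its $H_0$ with rational coefficients has rank one, forcing the fiber to be connected. The surjectivity together with the two inclusions then assembles into the proposition as stated.
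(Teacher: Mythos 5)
Your overall structure is reasonable and two of the three ingredients match the paper: you correctly observe that the image of $\sigma_k$ is exactly the set $M$ of $k$-tuples lying in a common maximal torus (so $M$ is compact, connected, contains $\BONE$, and hence $M\subset\Hom(\BZ^k,G)_\BONE$), and for the connectedness of the fibers you invoke Baird's Theorem 4.3 exactly as the paper does. The paper, however, also just cites Baird's Lemma 4.2 for the reverse inclusion $\Hom(\BZ^k,G)_\BONE\subset M$; it does not reprove it. You instead try to re-derive it by showing $M$ is clopen, and this is where the gap is.

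Closedness of $M$ is fine, but the openness step as you have written it does not work. You assert that ``via the exponential map of $H$, any commuting perturbation of the tuple is realized inside $H$.'' Taken literally, this is false: a commuting tuple $(b_1,\dots,b_k)$ close to $(a_1,\dots,a_k)\in T^k$ need not commute with the $a_i$, and so need not lie in $H=Z_G(a_1,\dots,a_k)$; already for $G=\SU(2)$, $k=2$, $a_1$ regular and $a_2=1$, one has $H=T$, while $b_1=g a_1 g^{-1}$, $b_2=1$ with $g$ small and $g\notin N(T)$ gives a nearby commuting pair with $b_1\notin T$. What you actually need is that, \emph{up to simultaneous conjugation}, a nearby commuting tuple lies in $H$; but this is not an elementary consequence of the exponential map of $H$ (which only sees tuples already in $H$), and it is essentially as strong as the statement you are trying to prove. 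A correct proof would have to supply a genuine local argument -- a slice theorem for the conjugation action on $\Hom(\BZ^k,G)$, or the kind of root-system/centralizer induction that appears in Baird's proof of Lemma 4.2 (or in Kac--Smilga). The second half of your openness step, that a commuting tuple in $H$ near $(a_1,\dots,a_k)$ can be translated by the central elements $a_i^{-1}$, exponentiated into a Cartan subalgebra of $\mathrm{Lie}(H_0)$ and then into a maximal torus of $H_0$ (hence of $G$), is essentially fine, but it is moot without the missing conjugation step.

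So: same strategy for the fiber connectedness and the same reliance on Baird's Theorem 4.3; a different and more ambitious route for the surjectivity/identification, but with an unjustified key claim where the paper instead simply cites Baird's Lemma 4.2. If you want to keep your clopen approach self-contained, you need to actually prove the ``perturbation is conjugate into $H$'' claim; otherwise, cite Baird for that step as the paper does.
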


The map $\sigma_{k}$ is certainly not injective; however, there is a large set on which it has this  desirable property. Recall that the action of $N(T)$ on $T$ by conjugation induces an action $W\actson T$. We denote by $(T^k)^*$ the subset of $T^k$ consisting of all $k$-tuples $(t_1,\dots,t_k)$ with the property that the trivial element is the only element in $W$ which fixes $t_i$ for $i=1,\dots,k$. Clearly, $T^k\setminus(T^k)^{*}$ is a compact analytic subset of co-dimension at least $k$ because $(T^k)^*$ contains the subspace of $k$-tuples $(t_{1},\dots,t_{k})\in T^{k}$ for which at least one of the $t_{i}$'s is regular. Therefore we obtain the following:

\begin{lem}\label{co-dimension}
The complement of $G\times_{N(T)}(T^k)^*$ in $G\times_{N(T)} T^k$ is a compact analytic subset of co-dimension at least $k\ge 2$.\qed
\end{lem}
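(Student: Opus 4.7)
The plan is to verify the two assertions of the lemma --- compactness and the codimension bound --- separately.

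Compactness is immediate: $G \times_{N(T)} T^k$ is compact as the continuous image of the compact space $G \times T^k$, and the set $G \times_{N(T)}(T^k)^*$ is open, since $(T^k)^*$ is defined by the open condition that for every non-trivial $w \in W$ there exists some index $i$ with $wt_iw^{-1} \neq t_i$. Its complement is therefore closed in a compact space, hence compact. Analyticity follows because all the constructions are real-analytic.

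For the codimension bound I would expand on the observation already noted before the statement of the lemma. Since a regular element of $T$ has trivial stabilizer in $W$, any $k$-tuple with at least one regular entry automatically belongs to $(T^k)^*$. Consequently
\[
T^k \setminus (T^k)^* \;\subseteq\; T_{sing}^{\,k},
\]
where $T_{sing}=T\setminus T^{reg}$. Now $T_{sing}$ is the finite union, over $w \in W \setminus \{1\}$, of the fixed sets $T^w$; since $W$ acts faithfully on the Lie algebra $\mathfrak{t}$, for each $w\neq 1$ the fixed subspace $\mathfrak{t}^w$ has positive codimension in $\mathfrak{t}$, so $T^w$ is a proper analytic subset of $T$ of codimension at least one. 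Hence $T_{sing}$ has codimension at least one in $T$, and $T_{sing}^{\,k}$ has codimension at least $k$ in $T^k$.

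Finally, the bound passes to the quotient: since $N(T)$ acts freely on the $G$-factor by right multiplication, the quotient map $G \times T^k \to G \times_{N(T)} T^k$ is a real-analytic principal $N(T)$-bundle, in particular a submersion, and therefore preserves codimensions of $N(T)$-invariant analytic subsets. The preimage of the complement of $G \times_{N(T)}(T^k)^*$ is $G \times (T^k \setminus (T^k)^*)$, whose codimension in $G \times T^k$ coincides with that of $T^k \setminus (T^k)^*$ in $T^k$. No step presents a serious obstacle; the main point is simply to sharpen the parenthetical remark before the lemma into the containment $T^k\setminus(T^k)^*\subseteq T_{sing}^{\,k}$, which immediately upgrades the codimension from $1$ to $k$.
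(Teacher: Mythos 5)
Your argument is correct and follows the same line as the paper, which dispenses with a separate proof and instead justifies the lemma in the sentence immediately preceding it: since $(T^k)^*$ contains all $k$-tuples with at least one regular entry, the complement $T^k\setminus(T^k)^*$ lies in $T_{sing}^{\,k}$, which has codimension at least $k$. You have simply unpacked that remark (identifying $T_{sing}$ with $\bigcup_{w\neq 1}T^w$ and using faithfulness of $W\actson \mathfrak t$) and made explicit the routine step of passing through the free $N(T)$-quotient, which the paper leaves implicit.
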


The open set $G\times_{N(T)}(T^k)^*$ will be important to us because it is homeomorphic to a very large subset of $\Hom(\BZ^k,G)_\BONE$:

\begin{lem}\label{homeo}
The restriction of the map $\sigma_{k}$ to $G\times_{N(T)}(T^k)^*$ is a homeomorphism onto its image.
\end{lem}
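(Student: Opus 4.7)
Write $U:=G\times_{N(T)}(T^k)^*$. The plan is to show (i) $\sigma_k$ is injective on $U$, and in fact no point outside $U$ can map under $\sigma_k$ into $\sigma_k(U)$; and (ii) the resulting continuous bijection $\sigma_k|_U$ onto its image is a homeomorphism, with image open in $\Hom(\BZ^k,G)_\BONE$. The principal ingredient is the following structural claim: if $(t_1,\ldots,t_k)\in (T^k)^*$, then $T$ is the unique maximal torus of $G$ containing $t_1,\ldots,t_k$; equivalently, $Z_G(t_1,\ldots,t_k)^0=T$. Indeed, $T$ is automatically a maximal torus of the compact connected Lie group $Z_G(t_1,\ldots,t_k)^0$, and the corresponding Weyl group $N_{Z_G(t_1,\ldots,t_k)^0}(T)/T$ sits inside $\bigcap_{i}W_{t_i}$, since any element normalizing $T$ while centralizing each $t_i$ defines a Weyl element fixing every $t_i$. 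By definition of $(T^k)^*$ this intersection is trivial, so $Z_G(t_1,\ldots,t_k)^0$ has trivial Weyl group; it therefore has empty root system and coincides with its maximal torus $T$.

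Now assume $\sigma_k([g,t_1,\ldots,t_k])=\sigma_k([g',t_1',\ldots,t_k'])$ with $(t_1,\ldots,t_k)\in (T^k)^*$. Setting $h=g^{-1}g'$, the identities $gt_ig^{-1}=g't_i'g'^{-1}$ rearrange to $h t_i'h^{-1}=t_i$ for every $i$, so the maximal torus $hTh^{-1}$ contains $t_1,\ldots,t_k$. By the structural claim, $hTh^{-1}=T$, hence $h\in N(T)$, and the defining relation of the quotient $G\times_{N(T)}T^k$ yields $[g,t_1,\ldots,t_k]=[g',t_1',\ldots,t_k']$. This establishes both the injectivity of $\sigma_k|_U$ and the stronger fiber-control statement $\sigma_k^{-1}(\sigma_k(U))\subseteq U$.

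To upgrade the continuous injection to a homeomorphism, set $K:=(G\times_{N(T)}T^k)\setminus U$. By Lemma \ref{co-dimension}, $K$ is compact, so its image $\sigma_k(K)$ is closed in $\Hom(\BZ^k,G)_\BONE$. Fiber-control forces $\sigma_k(U)\cap\sigma_k(K)=\emptyset$, so $\sigma_k(U)=\Hom(\BZ^k,G)_\BONE\setminus\sigma_k(K)$ is open. Finally, $\sigma_k|_U$ is closed onto $\sigma_k(U)$: given $C\subseteq U$ relatively closed with closure $\overline{C}$ in $G\times_{N(T)}T^k$, the image $\sigma_k(\overline{C})$ is closed, and fiber-control yields $\sigma_k(C)=\sigma_k(\overline{C})\cap\sigma_k(U)$. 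A continuous closed bijection is a homeomorphism. I expect the main obstacle to be the structural claim in the first paragraph; once it is in hand, the remainder of the argument is a formal consequence of the compactness of $G\times_{N(T)}T^k$ together with basic facts about compact connected Lie groups.
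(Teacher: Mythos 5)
Your argument is correct and follows essentially the same route as the paper's: injectivity on $G\times_{N(T)}(T^k)^*$ from the $(T^k)^*$ condition, then an upgrade to a homeomorphism via compactness of the source. Two things you make explicit are worth noting. First, the structural claim $Z_G(t_1,\dots,t_k)^0=T$ is exactly what justifies the paper's terse assertion that $(t_1,\dots,t_k)\in(T^k)^*$ forces $h^{-1}g\in N(T)$; your proof of it via the triviality of the Weyl group of $Z_G(\mathbf t)^0$ is clean and correct. Second, and more substantively, the paper moves from ``$\sigma_k$ is closed'' to ``$\sigma_k|_U$ is closed onto its image'' without comment, but the restriction of a closed map to an open subspace need not be closed onto its image in general; your fiber-control statement $\sigma_k^{-1}(\sigma_k(U))\subseteq U$ (which your injectivity argument delivers for free, since it never uses that the second tuple lies in $(T^k)^*$) is precisely what makes that step legitimate, and it also yields the bonus observation that $\CH^r=\sigma_k(U)$ is open. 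So the proposal is not a different approach, but a more careful execution that fills a small gap in the published proof.
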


\begin{proof}
Note that $G\times_{N(T)}T^{k}$ is a compact space and $\sigma_{k}:G\times_{N(T)} T^k\to\Hom(\BZ^k,G)_\BONE$ is a continuous map. In particular, 
$\sigma_{k}$ is a closed map. This shows that the restriction of $\sigma_{k}$ to the open subspace $G\times_{N(T)}(T^k)^*$ is a continuous, closed and 
surjective map onto its image. Therefore it suffices to see that restriction of $\sigma_{k}$ to $G\times_{N(T)}(T^k)^*$ is injective. Suppose that $(g,t_1,\dots,t_k)$ and $(h,s_1,\dots,s_k)$ in $G\times(T^k)^*$ are such that
$$\rho_{(g,t_1,\dots,t_k)}=\rho_{(h,s_1,\dots,s_k)}.$$
This implies that $gt_ig^{-1}=hs_ih^{-1}$ and hence, $(h^{-1}g)t_i(h^{-1}g)^{-1}=s_{i}\in T$ for $i=1,\dots,k$. The assumption that $(t_1,\dots,t_k)\in(T^k)^*$ implies that $h^{-1}g$ normalizes $T$. In particular $(g,t_1,\dots,t_k)$ and $(h,s_1,\dots,s_k)$ represent the same point in $G\times_{N(T)} T^k$ as we needed to prove.
\end{proof}

\begin{defi} Define $\CH^r$ to be the image of $G\times_{N(T)}(T^k)^*$ under the map $\sigma_{k}$. We will refer to $\CH^r$ as the {\em regular part} of $\Hom(\BZ^k,G)_\BONE$. Also define $\CH^s=\Hom(\BZ^k,G)_\BONE\setminus\CH^r$, the complement of the regular part in $\Hom(\BZ^{k},G)_{\BONE}$. We will refer to $\CH^s$ as the {\em singular part} of $\Hom(\BZ^k,G)_\BONE$.
\end{defi}

\noindent{\bf{Remark:}} The subspace $\CH^r$ is precisely the set of all representations $\rho:\BZ^k\to G$ whose image has a maximal torus as its Zariski closure; we will not need this fact.

\begin{lem}\label{key}
The singular part $\CH^s$ is nowhere dense and does not disconnect connected open subsets of $\Hom(\BZ^k,G)_\BONE$.
\end{lem}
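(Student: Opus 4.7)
The plan is to exploit Lemmas \ref{co-dimension} and \ref{homeo} together with the observation that $\sigma_k$ is a closed map with connected fibers. Closedness of $\sigma_k$ is free because its domain $X:=G\times_{N(T)}T^k$ is compact and its target is Hausdorff; connectedness of fibers is part of Proposition \ref{blabla}.

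First I would establish nowhere density. Writing $X^{*}:=G\times_{N(T)}(T^k)^{*}$, we have $\CH^r=\sigma_k(X^{*})$ and $\CH^s=\sigma_k(X\setminus X^{*})$. Since $X\setminus X^{*}$ is closed and $\sigma_k$ is a closed map, $\CH^s$ is closed in $\Hom(\BZ^k,G)_\BONE$. By Lemma \ref{co-dimension}, $X^{*}$ is open and dense in $X$, so its image $\CH^r$ under the continuous surjection $\sigma_k$ is dense in $\Hom(\BZ^k,G)_\BONE$. Hence $\CH^s$ is a closed set with empty interior, i.e., nowhere dense.

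For the non-disconnection property, let $U$ be any connected open subset of $\Hom(\BZ^k,G)_\BONE$ and set $V:=\sigma_k^{-1}(U)$, which is open in $X$. The crucial observation is that $V$ is connected: this is a standard consequence of the fact that a closed continuous surjection with connected fibers pulls connected open sets back to connected sets. The argument uses the classical lemma that, for a closed map $f$, any open neighborhood of a fiber $f^{-1}(y)$ contains $f^{-1}(W)$ for some open $W\ni y$; applied to a hypothetical clopen partition of $\sigma_k^{-1}(U)=A_1\sqcup A_2$ and $C_i:=\{y\in U:\sigma_k^{-1}(y)\subset A_i\}$, this forces $U=C_1\sqcup C_2$ to be a clopen partition of $U$. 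Now $V$ is open in the real analytic manifold $X$, and by Lemma \ref{co-dimension} the set $V\cap(X\setminus X^{*})$ is an analytic subset of $V$ of codimension at least $k\ge 2$. A general position argument shows that removing a codimension $\ge 2$ analytic subset from a connected real analytic manifold leaves it connected, so $V\cap X^{*}$ is connected. Finally, by Lemma \ref{homeo}, $\sigma_k$ restricts to a homeomorphism from $V\cap X^{*}$ onto $U\cap\CH^r=U\setminus\CH^s$, and we conclude that $U\setminus\CH^s$ is connected.

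The main obstacle is the connectedness of $V=\sigma_k^{-1}(U)$; once it is in hand, the codimension bound in the ambient smooth manifold $X$ delivers the rest of the argument, bypassing the fact that $\Hom(\BZ^k,G)_\BONE$ itself can be genuinely singular.
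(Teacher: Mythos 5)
Your proof is correct and follows essentially the same route as the paper: closedness of $\sigma_k$ plus density of $X^{*}$ gives nowhere density, connectedness of $\sigma_k^{-1}(U)$ reduces the problem to the smooth manifold $X$, and the codimension-$\geq 2$ bound from Lemma \ref{co-dimension} combined with the homeomorphism of Lemma \ref{homeo} finishes. The only difference is cosmetic: you spell out the standard point-set argument (via the clopen partition and the closed-map lemma) for why a closed continuous surjection with connected fibers pulls a connected open set back to a connected set, whereas the paper simply asserts it.
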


\begin{proof}
The fact that $\CH^s$ is nowhere dense follows from the fact that $\sigma_{k}$ is surjective and that the preimage $G\times_{N(T)}(T^k)^*$ of its complement $\CH^r=\Hom(\BZ^k,G)_\BONE\setminus\CH^s$ is dense in $G\times_{N(T)} T^k$.

We prove now that $\CH^s$ does not separate any connected open set $U\subset\Hom(\BZ^k,G)_\BONE$. Suppose that we have such a set $U$; if $U\cap\CH^s=\emptyset$ then there is nothing to prove, so also suppose that this is not the case. Then the connectivity of the fibers of $\sigma_{k}$ implies that the preimage $\sigma_{k}^{-1}(U)$ of $U$ under the surjective map $\sigma_{k}$ is connected as well. On the other hand, 
$$\sigma_{k}^{-1}(\CH^s)=(G\times T^k)/N(T)\setminus(G\times(T^k)^*)/N(T)$$ 
has co-dimension at least $k\ge 2$ by Lemma \ref{co-dimension}. 
A set of co-dimension at least 2 in a manifold does not disconnect connected open sets, and hence $\sigma_{k}^{-1}( U\setminus\CH^s)$ is connected. As $\sigma_{k}^{-1}(U\setminus\CH^s)$ and $U\setminus\CH^s$ are homeomorphic, by Lemma \ref{homeo}, we have that $\CH^s$ does not disconnect connected open sets.
\end{proof}

Recall that $\Hom(\BZ^k,G)_\BONE$ is real analytic and that, as the image of the compact analytic set 
$$(G\times_{N(T)} T^k)\setminus(G\times_{N(T)}(T^k)^*)$$
under the analytic map $\sigma_{k}$, the subset $\CH^s$ is closed and analytic. In particular, by the Whitney stratification theorem \cite{Thom,whitney}, $\Hom(\BZ^k,G)_\BONE$ admits the structure of a simplicial complex in such a way that $\CH^s$ is a subcomplex. The following lemma gives the reason we proved Lemma \ref{key} at all:

\begin{lem}\label{pushout}
Let $X$ be a compact simplicial complex and $Y\subset X$ a subcomplex. Suppose that $X\setminus Y$ is dense, and that $Y$ does not separate any connected open set in $X$. If  $x_{0}\in X\setminus Y$ is the basepoint, then $\pi_1(X\setminus Y,x_{0})$ surjects onto $\pi_1(X,x_{0})$. 
\end{lem}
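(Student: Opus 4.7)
The plan is to show that any loop in $X$ based at $x_0$ is homotopic in $X$ to a loop lying in $X\setminus Y$. Given a loop $\gamma:[0,1]\to X$, the strategy is to cover its image by finitely many simply connected open sets, replace $\gamma$ piecewise by paths avoiding $Y$, and glue these local replacements together using the simple connectivity of the covering sets.

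First I would use that $X$ is locally contractible (being a simplicial complex), together with compactness of $[0,1]$, to obtain a partition $0=t_0<t_1<\cdots<t_n=1$ and contractible open sets $U_1,\dots,U_n\subset X$ with $\gamma([t_{i-1},t_i])\subset U_i$ for each $i$ and with $x_0\in U_1\cap U_n$. For each $i=1,\dots,n-1$ let $C_i$ denote the connected component of $U_i\cap U_{i+1}$ containing $\gamma(t_i)$; being open and non-empty, $C_i$ meets $X\setminus Y$ by density, so I can choose $p_i\in C_i\setminus Y$, and I set $p_0=p_n=x_0$. The non-separation hypothesis applied to $U_i$ ensures that $U_i\setminus Y$ is connected, and local path-connectedness of $X$ upgrades this to path-connectedness, so there exists a path $\alpha_i:[t_{i-1},t_i]\to U_i\setminus Y$ from $p_{i-1}$ to $p_i$. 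The concatenation $\gamma'=\alpha_1*\cdots*\alpha_n$ is a loop in $X\setminus Y$ based at $x_0$.

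To see that $\gamma\simeq\gamma'$ in $X$, I would construct the homotopy rectangle by rectangle on the squares $[t_{i-1},t_i]\times[0,1]$. On the $i$-th square the bottom edge is $\gamma|_{[t_{i-1},t_i]}$, the top edge is $\alpha_i$, and the left and right edges are paths $\eta_{i-1}$ and $\eta_i$ taken inside $C_{i-1}$ and $C_i$ respectively, joining $\gamma(t_{i-1})$ to $p_{i-1}$ and $\gamma(t_i)$ to $p_i$, with $\eta_0$ and $\eta_n$ chosen as the constant path at $x_0$. All four boundary arcs then lie inside the simply connected set $U_i$, so the boundary loop extends to a continuous map of the square into $U_i$. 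Since consecutive squares share the common edge $\eta_i$, these local homotopies glue to a global homotopy from $\gamma$ to $\gamma'$ in $X$, whence $[\gamma]=i_*[\gamma']$ lies in the image of $\pi_1(X\setminus Y,x_0)\to\pi_1(X,x_0)$.

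The main technical obstacle is arranging that the four edges of each square lie simultaneously in one simply connected piece of the cover; this is precisely why $p_i$ must be chosen in the component $C_i$ of $U_i\cap U_{i+1}$ containing $\gamma(t_i)$, so that $\eta_i$ can be taken inside both $U_i$ and $U_{i+1}$. The non-separation hypothesis enters the argument only through the path-connectedness of $U_i\setminus Y$, which is exactly what enables the construction of the avoiding paths $\alpha_i$, while density of $X\setminus Y$ is what allows each $p_i$ to be chosen off of $Y$.
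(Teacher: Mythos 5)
Your proof is correct, and it takes a route that differs from the paper's in its global structure, although both rely on the same local ingredient (connectivity of $U\setminus Y$ for small contractible open $U$, which follows from the non-separation hypothesis). The paper proceeds by a minimization-and-contradiction argument: it first homotopes $\gamma$ so as to meet $Y$ in finitely many points, assumes this number is minimal, and then pushes $\gamma$ off $Y$ near one intersection point to contradict minimality. Your argument is fully constructive: you cover the image of $\gamma$ by finitely many contractible open sets via a Lebesgue-number subdivision, build a replacement loop $\gamma'$ in $X\setminus Y$ segment by segment, and assemble the homotopy $\gamma\simeq\gamma'$ square by square using simple connectivity of the covering sets. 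One advantage of your version is that it sidesteps entirely the paper's first step, namely the claim that any loop can be homotoped to meet $Y$ in a finite set of points; the paper treats this as evident ``as $Y$ is a nowhere dense subcomplex,'' but strictly speaking this requires a transversality or simplicial-approximation argument that is elided. Your construction never needs $\gamma$ to meet $Y$ in a controlled way at all. The paper's version, on the other hand, is shorter to state once that first step is granted. All the ingredients you invoke are sound: local contractibility of a simplicial complex, local path-connectedness so that the components $C_i$ and the sets $U_i\setminus Y$ are path-connected, and the cofibration/null-homotopy argument for filling in each square.
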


\begin{proof}
Suppose that we have a loop $\gamma$ in $X$ based at $x_{0}$; we want to homotope $\gamma$ away from $Y$. Note that, as $Y$ is a nowhere dense subcomplex, we can homotope $\gamma$ so that it meets $Y$ in a finite number of points; suppose that $\gamma$ has been chosen to minimize this number. 

Seeking a contradiction, assume that $\gamma$ meets $Y$ at some point $p$, and let $U\subset X$ be a small open contractible neighborhood of $p$. Let also $J\subset U$ be the proper subarc of $\gamma$ containing $p$ and let $p_\pm\notin Y$ be the endpoints of $J$. Since $U\setminus Y$ is connected, we can connect $p_{\pm}$ inside $U\setminus Y$ by some arc $I$. Since $U$ is contractible, both $I$ and $J$ are homotopic to each other in $U$ while fixing $p_{\pm}$. It follows that we can replace the curve $\gamma$ by a homotopic curve which meets $Y$ in a point less than $\gamma$ did. This is not possible by the choice of $\gamma$, so the lemma follows.
\end{proof}

\noindent{\bf{Remark:}} Lemma \ref{pushout} can be proved in greater generality, but we will only need the version presented here.

\medskip

From Lemma \ref{key} and Lemma \ref{pushout}, it follows that $\pi_1(\CH^r)$ surjects onto $\pi_1(\Hom(\BZ^k,G)_{\BONE})$. On the other hand, $\CH^r$ is the homeomorphic image of $G\times_{N(T)}(T^k)^*$ under the map $\sigma_{k}$ by Lemma \ref{homeo}. Hence, we deduce the following:

\begin{kor}\label{pi1-surject}
If $G$ is a compact connected Lie group, the map $$\sigma_{k}:G\times_{N(T)} T^k\to\Hom(\BZ^k,G)_\BONE$$ is $\pi_1$-surjective.\qed
\end{kor}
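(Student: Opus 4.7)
The plan is to string together the three preceding lemmas exactly as the preamble to the corollary suggests. First, I would invoke the Whitney stratification theorem (mentioned just before Lemma \ref{pushout}) to put a compact simplicial complex structure on $\Hom(\BZ^k,G)_\BONE$ in which $\CH^s$ is a subcomplex; this puts us in the setting of Lemma \ref{pushout}. The hypotheses of Lemma \ref{pushout} are exactly the content of Lemma \ref{key}, namely that $\CH^s$ is nowhere dense and does not separate any connected open set. Choosing as basepoint the trivial representation $\BONE$, which lies in $\CH^r$ (since $(1,\dots,1)\in(T^k)^*$), Lemma \ref{pushout} therefore gives a surjection
$$\pi_1(\CH^r,\BONE)\twoheadrightarrow\pi_1(\Hom(\BZ^k,G)_\BONE,\BONE).$$

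Next, by Lemma \ref{homeo}, the map $\sigma_k$ restricts to a homeomorphism $G\times_{N(T)}(T^k)^*\to\CH^r$. Choosing a basepoint in $G\times_{N(T)}(T^k)^*$ mapping to $\BONE$, this identifies the surjection above with a surjection
$$\pi_1\bigl(G\times_{N(T)}(T^k)^*\bigr)\twoheadrightarrow\pi_1(\Hom(\BZ^k,G)_\BONE)$$
that factors through $\pi_1(\sigma_k)$.

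To finish, I would note that $G\times_{N(T)}T^k$ is a smooth manifold and that $G\times_{N(T)}(T^k)^*$ is the complement of the compact analytic subset of codimension $\geq k\geq 2$ supplied by Lemma \ref{co-dimension}. By the standard transversality fact that removing a closed subset of codimension at least two from a manifold does not affect the fundamental group, the inclusion $G\times_{N(T)}(T^k)^*\hookrightarrow G\times_{N(T)}T^k$ is $\pi_1$-surjective (indeed a $\pi_1$-isomorphism). Composing this surjection with the previous one yields the desired surjection
$$\pi_1(\sigma_k):\pi_1\bigl(G\times_{N(T)}T^k\bigr)\twoheadrightarrow\pi_1(\Hom(\BZ^k,G)_\BONE),$$
which is the corollary. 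There is no real obstacle here since the machinery has been prepared; the only minor care needed is in the basepoint choice, which is handled by the observation that the trivial representation sits in the regular part $\CH^r$.
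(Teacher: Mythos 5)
Your argument is correct and follows essentially the same route as the paper: combine Lemmas \ref{key} and \ref{pushout} to see that $\pi_1(\CH^r)$ surjects onto $\pi_1(\Hom(\BZ^k,G)_\BONE)$, then transport this across the homeomorphism of Lemma \ref{homeo}. The only superfluity is your final paragraph invoking Lemma \ref{co-dimension}: once you have observed that the surjection $\pi_1\bigl(G\times_{N(T)}(T^k)^*\bigr)\twoheadrightarrow\pi_1(\Hom(\BZ^k,G)_\BONE)$ factors as $\pi_1(\sigma_k)$ precomposed with the inclusion-induced map, surjectivity of $\pi_1(\sigma_k)$ is immediate, irrespective of what the inclusion $G\times_{N(T)}(T^k)^*\hookrightarrow G\times_{N(T)}T^k$ does on $\pi_1$.
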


Our next goal is to prove that the homomorphism
$$\pi_1(\sigma_{k}):\pi_1(G\times_{N(T)}\ T^k)\to\pi_1(\Hom(\BZ^k,G)_\BONE)$$
is trivial if we further assume that $G$ is simply connected. Recall that $\Hom(\BZ^k,G)_\BONE$ is a based space, with base point $\BONE$. We can also view $G\times_{N(T)}\ T^k$ as a based space by taking as base point the class representing the element $(1_G,\dots,1_G)\in G\times T^k$. With this choice of base points, the map $\sigma_{k}$ is a based map.

We will show that $\pi_1(\sigma_{k})$ is the trivial map by showing that a suitable set of generators of $\pi_1(G\times_{N(T)} T^k)$ is in the kernel of $\pi_{1}(\sigma_{k})$. In order to describe such a set of generators, recall that projection onto the first factor $p_{1}:G\times_{N(T)} T^k\to G/N(T)$ induces a fibration sequence of the form
\begin{equation}\label{fibration}
T^k\to G\times_{N(T)}T^k\stackrel{p_{1}}{\rightarrow} G/N(T).
\end{equation}
The tail end of the associated homotopy long exact sequence is the following exact sequence: 
\begin{equation}\label{homotopy-seq}
\pi_1(T^k)\to\pi_1(G\times_{N(T)}T^k)\stackrel{(p_{1})_{*}}{\rightarrow}\pi_1(G/N(T))\to 1.
\end{equation}
Observe that the map $p_{1}$ admits a section
\begin{align*}
s:G/N(T)&\to G\times_{N(T)}(T^k)\\ 
[g]&\mapsto[g,(1_G,\dots,1_G)]
\end{align*}
where $1_G$ is the unit element in $G$, and $[\cdot]$ denotes the class of the corresponding element in $G/N(T)$ and $G\times_{N(T)}T^k$, respectively. This section gives a splitting of the sequence \eqref{homotopy-seq}. We deduce:

\begin{lem}\label{generators}
$\pi_1(G\times_{N(T)}T^k)$ is generated by $\pi_1(\{1_G\}\times T^k)$ and by $\pi_1(s(G/N(T)))$.\qed
\end{lem}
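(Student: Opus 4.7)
The plan is to exploit the splitting of the homotopy exact sequence \eqref{homotopy-seq} provided by the section $s$. First I would observe that the fiber of the fibration \eqref{fibration} over the base point $[1_G]\in G/N(T)$ is precisely the set of classes $[(1_G,t_1,\dots,t_k)]$ with $(t_1,\dots,t_k)\in T^k$; since the diagonal $N(T)$-action on $G\times T^k$ fixes the first coordinate $1_G$ only when $n=1_G$, this fiber is homeomorphic to $\{1_G\}\times T^k$. Hence the image of $\pi_1(T^k)$ under the fiber inclusion in the exact sequence \eqref{homotopy-seq} is exactly $\pi_1(\{1_G\}\times T^k)$.

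Next, since $s$ is a continuous section of $p_{1}$, we have $(p_{1})_{*}\circ s_{*}=\mathrm{id}_{\pi_1(G/N(T))}$. Given any class $\alpha\in\pi_1(G\times_{N(T)}T^k)$, set
\[
\beta=\alpha\cdot\bigl(s_{*}((p_{1})_{*}(\alpha))\bigr)^{-1}\in\pi_1(G\times_{N(T)}T^k).
\]
Then $(p_{1})_{*}(\beta)=(p_{1})_{*}(\alpha)\cdot (p_{1})_{*}(\alpha)^{-1}=1$, so $\beta$ lies in the kernel of $(p_{1})_{*}$. By exactness of \eqref{homotopy-seq} this kernel is the image of $\pi_1(T^k)$, which we identified with $\pi_1(\{1_G\}\times T^k)$. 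On the other hand $s_{*}((p_{1})_{*}(\alpha))\in\pi_1(s(G/N(T)))$ by construction. Hence
\[
\alpha=\beta\cdot s_{*}((p_{1})_{*}(\alpha))
\]
is a product of an element of $\pi_1(\{1_G\}\times T^k)$ and an element of $\pi_1(s(G/N(T)))$, proving the lemma.

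There is no real obstacle here beyond setting up the bookkeeping correctly; the entire content of the argument is the split short exact sequence coming from \eqref{homotopy-seq} and the section $s$. The only place one needs to be mildly careful is in identifying the fiber of $p_1$ with $\{1_G\}\times T^k$ (rather than a quotient of $T^k$), which is what justifies calling the kernel of $(p_{1})_{*}$ the image of $\pi_1(\{1_G\}\times T^k)$ in the statement.
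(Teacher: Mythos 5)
Your argument is correct and is precisely the one the paper has in mind (the paper marks the lemma with a \qed because it follows immediately from the split short exact sequence \eqref{homotopy-seq} set up just before). You spell out the bookkeeping — identifying the fiber over $[1_G]$ with $\{1_G\}\times T^k$ via the freeness of the $N(T)$-action on the $G$-factor, and factoring an arbitrary $\alpha$ as $\beta\cdot s_*((p_1)_*\alpha)$ with $\beta\in\ker(p_1)_*$ — which is exactly the standard consequence of a split exact sequence that the paper leaves to the reader.
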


At this point we would like to notice that the composition of the section $s$ with the map $\sigma_{k}$ is the constant map; the image is namely the trivial representation $\BONE$. It follows that
$$\pi_1(s(G/N(T)))\subset\Ker(\pi_1(\sigma_{k})).$$
In particular, by Lemma \ref{generators}, in order to show that $\pi_1(\sigma_{k})$ is trivial it suffices to show that the restriction of the map $\sigma_{k}$ to the fiber $\{1_G\}\times T^k$ is trivial in $\pi_1$.  We do this next. Identifying
$$\pi_1(T^k)=\pi_1(T)\times\dots\times\pi_1(T)$$
we see that $\pi_1(T^k)$ is generated by loops which are constant on each component but one. More concretely, for every $1\le a\le k$ let 
\begin{align*}
i_{a}:T&\to T\times\dots\times T\\
x&\mapsto (1_{G},\dots,x,\dots,1_{G}).
\end{align*}
be the natural inclusion of $T$ into the $a$-th factor of $T\times\cdots\times T$. Then $\pi_1(T^k)$ is generated by loops of the form $\eta(t)= i_{a}(\gamma(t))$, where $\gamma:[0,1]\to T$ is a loop in $T$ based at $1_G$.  Note that the image of a loop of the form $i_{a}(\gamma)$ under $\sigma_{k}$ is a loop $(\rho_t)$ in $\Hom(\BZ^k,G)_{\BONE}$ where each $\rho_t=\sigma_{k}(\eta(t))$ is given by
\[
\rho_t(n_1,\dots,n_k)=i_{a}(\gamma(t)^{n_a}),
\]
where here by abuse of notation we also denote by 
\begin{align*}
i_{a}:G&\to \Hom(\BZ^{k},G)\\
g&\mapsto (1_{G},\dots,g,\dots,1_{G})
\end{align*}
the inclusion of $G$ into the $a$-th factor of $\Hom(\BZ^{k},G)\subset G^{k}$. By assumption, and this is the first and only time that we use this assumption, $\pi_1(G)$ is trivial. Hence, the loop $\gamma(t)$ can be contracted in $G$ to the trivial loop. Let 
\begin{align*} 
[0,1]\times[0,1]&\to G\\ 
(s,t)&\mapsto\gamma^s(t)
\end{align*} 
be such a homotopy with $\gamma^0(t)=\gamma(t)$ and with $\gamma^1(t)=1_G$ for all $t$. Consider the homotopy
\begin{align*}
[0,1]\times[0,1]&\to\Hom(\BZ^k,G)_{\BONE},\\ 
(t,s)&\mapsto\rho_t^s
\end{align*}
where 
\[
\rho_t^s(n_1,\dots,n_k)=i_{a}(\gamma^s(t)^{n_a}).
\] 
This homotopy begins with the loop $(\rho_t)=\sigma_{k}(\eta)$ and ends with the constant curve with image the trivial representation $\BONE$. We have proved that the restriction of $\sigma_{k}$ to the fiber $\{1_G\}\times T^k$ of the fibration 
\[
G\times_{N(T)} T^k\to G/N(T)
\]
is trivial in $\pi_1$. Combining this fact with our earlier observations,  we deduce that $\pi_1(\sigma_{k})$ is the trivial homomorphism. On the other, by  Lemma \ref{pi1-surject}, the map $\pi_1(\sigma_{k})$ is surjective. This proves that $$\pi_1(\Hom(\BZ^k,G)_\BONE)=1.$$
In conclusion, we have proved the following theorem: 

\begin{sat}\label{casesimplyconnected}
Let $G$ be a simply connected compact Lie group. If $\Hom(\BZ^{k},G)$ has base point $\BONE$, then 
\[
\pi_{1}(\Hom(\BZ^{k},G))=1.
\]
\end{sat}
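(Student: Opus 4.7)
The strategy is to exploit the map $\sigma_k : G\times_{N(T)} T^k \to \Hom(\BZ^k,G)_\BONE$ from Proposition \ref{blabla}. By Corollary \ref{pi1-surject} we already know $\pi_1(\sigma_k)$ is surjective, so it suffices to prove that $\pi_1(\sigma_k)$ is in fact the zero homomorphism; this forces $\pi_1(\Hom(\BZ^k,G)_\BONE) = 1$, and since $\BONE$ lies in $\Hom(\BZ^k,G)_\BONE$ this is the same as $\pi_1(\Hom(\BZ^k,G),\BONE)$.

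To show $\pi_1(\sigma_k) = 0$, I would first pin down a convenient set of generators for $\pi_1(G\times_{N(T)} T^k)$. The fibration sequence \eqref{fibration} admits the section $s:G/N(T)\to G\times_{N(T)} T^k$, $[g]\mapsto [g,(1_G,\dots,1_G)]$, which splits the tail \eqref{homotopy-seq} of the associated long exact sequence. By Lemma \ref{generators}, $\pi_1(G\times_{N(T)} T^k)$ is generated by the image of $\pi_1(\{1_G\}\times T^k)$ and by $\pi_1(s(G/N(T)))$. The second set of generators is automatically killed by $\pi_1(\sigma_k)$, because $\sigma_k\circ s$ is constantly the trivial representation $\BONE$. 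So the whole problem reduces to showing that the restriction of $\sigma_k$ to the fiber $\{1_G\}\times T^k$ is $\pi_1$-trivial.

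Writing $\pi_1(T^k) = \pi_1(T)^k$, it is enough to kill each generator of the form $\eta(t) = (1_G,\dots,\gamma(t),\dots,1_G)$, where $\gamma$ is a loop in $T$ based at $1_G$ in the $a$-th slot. Under $\sigma_k$ such a loop becomes the path of representations $\rho_t$ with $\rho_t(n_1,\dots,n_k) = (1_G,\dots,\gamma(t)^{n_a},\dots,1_G)$. This is precisely the point where I would use, for the only time, the hypothesis that $G$ is simply connected: pick a null-homotopy $\gamma^s$ of $\gamma$ in $G$ with $\gamma^0 = \gamma$ and $\gamma^1 \equiv 1_G$, and define $\rho_t^s(n_1,\dots,n_k) = (1_G,\dots,\gamma^s(t)^{n_a},\dots,1_G)$. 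Each $\rho_t^s$ lies in $\Hom(\BZ^k,G)_\BONE$ since its image sits inside a one-parameter subgroup of $G$, hence inside some maximal torus, and the family $\rho_t^s$ furnishes a homotopy from $\sigma_k(\eta)$ to the constant loop at $\BONE$.

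The substantive work has already been done in Lemmas \ref{co-dimension}, \ref{homeo}, \ref{key} and \ref{pushout}, which feed into the $\pi_1$-surjectivity of $\sigma_k$; what remains for this theorem is essentially a formal contraction. I therefore do not expect a genuine obstacle, and the only technical point worth verifying is that the explicit homotopy $\rho_t^s$ stays inside $\Hom(\BZ^k,G)_\BONE$, which is immediate from Proposition \ref{blabla} since each $\rho_t^s$ takes values in an abelian subgroup of $G$.
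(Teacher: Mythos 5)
Your proposal follows essentially the same path as the paper's proof: reduce to $\pi_1$-surjectivity of $\sigma_k$ via Corollary \ref{pi1-surject}, use the split fibration and Lemma \ref{generators} to pick out generators, observe that the section is killed because $\sigma_k\circ s$ is constant, and contract each loop $\sigma_k(i_a(\gamma))$ by pushing a null-homotopy of $\gamma$ in $G$ (the one place simple connectedness is used) through the formula $\rho_t^s(n_1,\dots,n_k)=i_a(\gamma^s(t)^{n_a})$. The only addition is your explicit check that $\rho_t^s$ stays in $\Hom(\BZ^k,G)_\BONE$, which is fine (though the image is a cyclic subgroup rather than a one-parameter subgroup; it lies in a maximal torus nonetheless, or one can simply note that a connected component is preserved under homotopy).
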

\noindent This is precisely Theorem \ref{main} in the case where $G$ is simply connected.

\section{The general case}\label{sec-not}

In this section we prove Theorem \ref{main} for any compact Lie group $G$. 

\medskip

To begin with, suppose that $G$ is a compact Lie group. Denote by $G_{0}$ the connected component of $G$ containing $1_{G}$. As mentioned in the introduction, the natural inclusion $i:G_{0}\hookrightarrow G$ gives rise to a map 
\[
\Hom(\BZ^{k},G_{0})\stackrel{i_{*}}{\rightarrow}\Hom(\BZ^{k},G)
\]
that induces an isomorphism of $\pi_{1}$ for any $k$. Because of this we only need to consider the case where $G$ is a compact connected Lie group.
Suppose then that $G$ is such a Lie group. By \cite[Theorem 6.19]{HM} we can write $G=\tilde{G}/K$, where $K$ is a finite subgroup in the center of $\tilde{G}$, and where
\[
\tilde{G} =(\BS^1)^r\times G_{1}\times\cdots\times G_{s}
\]
for some compact simply connected  and simple Lie groups $G_{1},\dots,G_{s}$. If we write 
\[
H=G_{1}\times\cdots\times G_{s}
\]
then $\tilde{G}=(\BS^1)^r\times H$ and $H$ is a compact and simply connected Lie group. Notice that the projection map 
\[
p:\tilde{G}\to G
\]
is both a homomorphism and a covering map, with covering group $K$; in particular, it is a local isomorphism.
In \cite[Lemma 2.2]{Goldman}, Goldman showed that if $\pi$ is a finitely generated group and $p:G'\to G$ is a local isomorphism, 
then  composition with $p$ defines a continuous map 
\[
p_{*}:\Hom(\pi,G')\to \Hom(\pi,G),
\]
such that the image of $p_{*}$ is a union of connected components of 
$\Hom(\pi,G)$. Moreover, if $Q$ is a connected component in the image of $p_{*}$, then the restriction of $p_{*}$
\[
(p_{*})_{|p_{*}^{-1}(Q)}:p_{*}^{-1}(Q)\to Q
\] 
is a covering space, with covering group $\Hom(\pi,K)$. We can apply this to the particular case of  $\pi=\BZ^{k}$ and $Q=\Hom(\BZ^{k},G)_{\BONE}$. Thus we obtain a covering space 
\[
p^{-1}(\Hom(\BZ^{k},G)_{\BONE})\to 
\Hom(\BZ^{k},G)_{\BONE} 
\]
with covering group $K^{k}=\Hom(\BZ^{k},K)$. For this covering space, the action of $K^{k}$ on $p^{-1}(\Hom(\BZ^{k},G)_{\BONE})$ corresponds 
to left component-wise multiplication. By Lemma \ref{blabla}, the space $\Hom(\BZ^{k},\tilde{G})_{\BONE}$ is precisely the subspace of $\Hom(\BZ^k,\tilde{G})$ of commuting $k$-tuples contained in some maximal torus of $\tilde{G}$. Using this and the fact that in any compact Lie group the center is contained in 
any maximal torus (see for example \cite[Corollary 4.47]{{Knapp}}), it follows that 
\[
p^{-1}(\Hom(\BZ^{k},G)_{\BONE})=\Hom(\BZ^{k},\tilde{G})_{\BONE}.
\] 
This shows that we have a covering space sequence 
\[
K^{k}\stackrel{i_{*}}{\rightarrow}\Hom(\BZ^{k},\tilde{G})_{\BONE}\stackrel{p_{*}}{\rightarrow} \Hom(\BZ^{k},G)_{\BONE}.
\]
The long exact sequence in homotopy  associated to this covering space shows that there is a short exact sequence
\begin{equation}\label{exacthomotopy}
1\to \pi_{1}(\Hom(\BZ^{k},\tilde{G})_{\BONE}) \stackrel{p_{*}}{\rightarrow} \pi_{1}(\Hom(\BZ^{k},G)_{\BONE})\stackrel{\delta}{\rightarrow} K^{k}\to 1.
\end{equation} 
On the other hand  there is a natural homeomorphism
\[
\Hom(\BZ^{k},\tilde{G})_{\BONE}\cong \Hom(\BZ^{k},(\BS^{1})^{r})\times \Hom(\BZ^{k},H)_{\BONE}.
\]
In particular 
\[
\pi_{1}(\Hom(\BZ^{k},\tilde{G})_{\BONE})\cong \pi_{1}(((\BS^1)^{r})^{k})\times \pi_{1}(\Hom(\BZ^{k},H)_{\BONE}).
\]
As $H$ is a compact and simply connected Lie group, by Theorem \ref{casesimplyconnected} we have $\pi_{1}(\Hom(\BZ^{k},H)_{\BONE})=1$. Thus 
\[
\pi_{1}(\Hom(\BZ^{k},\tilde{G})_{\BONE})\cong \pi_{1}(((\BS^1)^{r})^{k})\cong (\BZ^{r})^{k},
\]
with an isomorphism induced by the inclusion map 
\begin{align*}
(\BS^{1})^{r}&\to (\BS^{1})^{r}\times H=\tilde{G}\\
x&\mapsto (x,1).
\end{align*}

This shows that (\ref{exacthomotopy}) is a short exact sequence of the form 
\begin{equation}
1\to (\BZ^{r})^{k}\stackrel{p_{*}}{\rightarrow}\pi_{1}(\Hom(\BZ^{k},G)_{\BONE})\stackrel{\delta}{\rightarrow} K^{k}\to 1.
\end{equation} 
On the other hand, we also have a covering space $K\stackrel{i}{\rightarrow} \tilde{G}\stackrel{p}{\rightarrow} G$ and the long exact sequence in homotopy associated to this sequence gives a short exact sequence  
\[
1\to \pi_{1}(\tilde{G})\cong \BZ^{r}\stackrel{p_{*}}{\rightarrow}\pi_{1}(G)\stackrel{\delta}{\rightarrow} K\to 1.
\]
By taking the direct sum $k$-copies of this sequence we obtain a short exact sequence 
\begin{equation}
1\to (\BZ^{r})^{k}\stackrel{(p_{*})^{k}}{\rightarrow}(\pi_{1}(G))^{k}\stackrel{(\delta)^{k}}{\rightarrow} K^{k}\to 1.
\end{equation}
We claim that we can find a natural homomorphism $h_{G}$ making the following diagram commuting:

\begin{equation}\label{commutativediagram}
\begin{CD}
1@>>>  (\BZ^{r})^{k} @>{ (p_{*})^{k}}>>
(\pi_{1}(G))^{k}@>(\delta)^{k}>>K^{k}@>>>1\\
@.@V{id}VV @V{h_{G}}VV @V{id}VV\\
1@>>> (\BZ^{r})^{k} @>p_{*}>> \pi_{1}(\Hom(\BZ^{k},G)_{\BONE})@>\delta>> K^{k} @>>>1.
\end{CD}
\end{equation}
Then by the five lemma it follows that 
\[
h_{G}:(\pi_{1}(G))^{k}\to \pi_{1}(\Hom(\BZ^{k},G)_{\BONE})
\]
is an isomorphism hence proving Theorem \ref{main}. 

To construct the homomorphism $h_{G}$, define for every $1\le a\le k$
\begin{align*}
j_{a}:\pi_{1}(G)&\to (\pi_{1}(G))^{k}\\
[\alpha]&\mapsto (1,\dots,[\alpha],\dots,1).
\end{align*}
In other words, $j_{a}$ is the inclusion of $\pi_{1}(G)$ into the $a$-th factor of $(\pi_{1}(G))^{k}$. Notice that the elements in the image of 
$j_{1},\dots,j_{k}$ generate $(\pi_{1}(G))^{k}$, and thus it suffices to define $h$ on elements of the form $j_{a}([\alpha])$ for some $1\le a\le k$ and some loop $\alpha:[0,1]\to G$ based at $1_{G}$. For such elements define 
\[
h(j_{a}([\alpha]))=[i_{a}(\alpha)]\in \pi_{1}(\Hom(\BZ^{k},G)),
\]
where as before \begin{align*}
i_{a}:G&\to \Hom(\BZ^{k},G)\\
g&\mapsto (1_{G},\dots,g,\dots,1_{G})
\end{align*}
the inclusion of $G$ into the $a$-th factor of $\Hom(\BZ^{k},G)\subset G^{k}$. In this way we obtain a well-defined homomorphism 
\[
h_{G}:(\pi_{1}(G))^{k}\to \pi_{1}(\Hom(\BZ^{k},G)).
\]
From the definition it follows at once that $h_{G}$ is a natural map. To see that diagram (\ref{commutativediagram}) commutes note that for every $1\le a\le k$ we have a morphism of fibrations sequences 
\begin{equation*}
\begin{CD}
K @>{i}>>\tilde{G}@>{p}>>G\\
@V{i_{a}}VV @V{i_{a}}VV @V{i_{a}}VV\\
K^{k}@>{i_{*}}>> \Hom(\BZ^{k},\tilde{G})_{\BONE}@>p_{*}>> \Hom(\BZ^{k},G)_{\BONE}.
\end{CD}
\end{equation*}
The naturality of the long exact sequence in homotopy shows that the corresponding diagram in homotopy groups commutes. This diagram is precisely the restriction of
(\ref{commutativediagram}) onto the $a$-th factor. This proves the commutativity of (\ref{commutativediagram}).

\section{Examples and general remarks}\label{examples}

In this section we explore the situation in which the base point of $\Hom(\BZ^{k},G)$ is no longer assumed to be in the path-connected component $\Hom(\BZ^{k},G)_{\BONE}$. For instance, our second example below shows that even if $G$ is simply connected, $\Hom(\BZ^k,G)$ may have connected components with non-trivial $\pi_1$.

\medskip

To start, let $H$ be a compact connected Lie group. As pointed out above, the space $\Hom(\BZ^{k},H)$ is not necessarily connected. This can be explained as follows. Suppose first that $H$ is not simply connected.  Then $H$ can be written in the form $H=G/K$, where $G$ is the universal cover of $H$ and $K\subset G$ is a closed central subgroup. Let 
\[
p:G\to G/K=H
\] 
be the natural projection. Given a commuting sequence $(x_{1},\dots,x_{k})$ in $H$ we can find a lifting $\tilde{x}_{i}$ of $x_{i}$ in $G$ for all $1\le i\le k$. The sequence $(\tilde{x}_{1},\dots,\tilde{x}_{k})\in G^{k}$ is not necessarily a commuting sequence. Instead, $[\tilde{x}_{i},\tilde{x}_{j}]\in K=\Ker(p)\subset Z(G)$. We call such a sequence  a $K$-almost commuting sequence in $G$. Following \cite{ACG}, given a Lie group $G$ and a closed subgroup $K\subset Z(G)$, we denote by $B_{k}(G,K)$ the set of $K$-almost commuting $k$-tuples; that is, the set of sequences $(x_{1},\dots,x_{k})$ such that $[x_{i},x_{j}]\in K$ for all $1\le i,j\le k$. The set $B_{k}(G,K)$ is given the subspace topology under the natural inclusion $B_{k}(G,K)\subset G^{k}$. It is easy to see that projection map  $p:G\to G/K$ induces a $K^{k}$-principal 
bundle
\[
p_{*}:B_{k}(G,K)\to \Hom(\BZ^{k},G/K).
\]
This shows that we can understand the space of commuting elements in $G/K$ by studying the space of $K$-almost commuting elements in $G$. For example, by keeping track of the different commutators of sequences in $B_{k}(G,K)$ this space can be broken down into a disjoint union of subspaces that are both open and closed in $B_{k}(G,K)$, hence a union of path-connected components. Moreover, the image of these components under the map $p_{*}$ provides different path-connected components of the space $\Hom(\BZ^{k},G/K)$. 

\medskip

\noindent{\bf{Example 1:}} Given an integer $m\ge 1$ and any prime number $p$, consider $\SU(p)^{m}$, the product of $m$-copies of $\SU(p)$. Let $\Delta(\BZ/p)$ be the diagonal inclusion of $\BZ/p$ into the center of $\SU(p)^{m}$. Define 
\[
G_{m,p}:=\SU(p)^{m}/\Delta(\BZ/p).
\]
Thus $G_{m,p}$ is the $m$-fold central product of $\SU(p)$. The space of commuting elements in $G_{m,p}$ can be understood by studying the space of almost commuting elements in $\SU(p)^{m}$. Indeed, let  $E_{p}\subset \SU(p)$ be the quaternion group $Q_{8}$ of order eight when $p=2$ and the extraspecial $p$--group of order $p^3$ and exponent $p$ when $p>2$. In \cite{ACG} it was proved that for any $k\ge 1$ the space $\Hom(\BZ^{k},G_{m,p})$ has
\[
N(k,m,p)=\frac{p^{(m-1)(k-2)}(p^{k}-1)(p^{k-1}-1)}{p^{2}-1}+1
\]
path--connected components. One of these path-connected components is $\Hom(\BZ^{k},G_{m,p})_{\BONE}$ and all others are homeomorphic to 
\[
A_{m,p}:=\SU(p)^{m}/((\BZ/p)^{m-1}\times E_{p}).
\]
The path-connected components of $\Hom(\BZ^{k},G_{m,p})$ that are homeomorphic to $A_{m,p}$ have the additional property that the centralizer in $G_{m,p}$ of any sequence in them is a finite group. Let $\x\in \Hom(\BZ^{k},G_{m,p})$ be a point which is taken as the base point of $\Hom(\BZ^{k},G_{m,p})$. Using Theorem \ref{main} it follows that 
\[
\pi_{1}(\Hom(\BZ^{k},G_{m,p}),\x)\cong \pi_{1}(G_{m,p})^{k}\cong (\BZ/p)^{k}
\]
whenever $\x$ lies in $\Hom(\BZ^{k},G_{m,k})_{\BONE}$. On the other hand, if $\x$ lies in a path-connected component of $\Hom(\BZ^{k},G_{m,p})$ that is 
homeomorphic to $A_{m,p}$, then since $SU(p)^{m}$ is simply connected we have that
\[
\pi_{1}(\Hom(\BZ^{k},G_{m,p}),\x)\cong (\BZ/p)^{m-1}\times E_{p}.
\]
Note in particular that $\pi_{1}(\Hom(\BZ^{k},G_{m,p}),\x)$ is independent of $k$ in this case. 

\medskip

Now let's turn our attention to the case of a Lie group $G$ that is assumed to be simply connected. Even in this situation the space $\Hom(\BZ^{k},G)$ is not necessarily path-connected. If fact by \cite[Theorem 4.1]{Baird}, if $G$ is a compact simple Lie group such that $\Hom(\BZ^{k},G)$ is path-connected for every $k\ge 1$, then $G$ is either $\SU(m)$ or $\Sp(m)$ for some $m\ge 1$. Thus in general, the space $\Hom(\BZ^{k},G)$ has many path-connected components. This can be seen as follows. Let $\x:=(x_{1},\dots,x_{k})\in\Hom(\BZ^{k},G)$ and consider the centralizer $Z_{G}(\x)$ of the $k$-tuple $(x_{1},\dots,x_{k})$ in $G$. Let $S$ be a maximal torus in $Z_{G}(\x)$. Proposition \ref{blabla} shows that $\x$ lies in $\Hom(\BZ^{k},G)_{\BONE}$ if and only if $S$ is a maximal torus in $G$. Therefore the space $\Hom(\BZ^{k},G)$ is not path-connected precisely when we can find a commuting $k$-tuple $\x$ such that $Z_{G}(\x)$ does not contain a maximal torus in $G$. The following example, first studied by Kac and Smilga in \cite{KS}, illustrates this possibility.

\medskip

\noindent{\bf{Example 2:}} The space $\Hom(\BZ^{3},\Spin(7))$ has two path-connected components. One of these components is $\Hom(\BZ^{3},\Spin(7))_{\BONE}$ the other component we denote by $B_{3}$.  In \cite{KS}, it was proved directly that in $\Spin(7)$ there is a commuting triple $(x_{1},x_{2},x_{3})$, unique up to conjugation, such that any maximal torus in $Z_{\Spin(7)}(x_{1},x_{2},x_{3})$ has rank $0$, thus explaining the existence of $B_{3}$. This can also be seen in the following way.  As explained in \cite{KS}, we can find an element $x_{1}$ in $\Spin(7)$ such that 
\[
Z_{\Spin(7)}(x_{1})=(\SU(2))^{3}/\Delta(\BZ/2)=G_{3,2}.
\] 
By the previous example, the space $\Hom(\BZ^{2},G_{3,2})$ has two different path-connected components. In particular, we can choose $(x_{2},x_{3})\in \Hom(\BZ^{2},G_{3,2})$ outside the path-connected component containing the trivial representation $\BONE$. As pointed out above, elements in this component have the additional property that $Z_{G_{3,2}}(x_{2},x_{3})$ is a finite group. This shows that any maximal torus in $Z_{\Spin(7)}(x_{1},x_{2},x_{3})$ has rank $0$, as any maximal torus in $Z_{G_{3,2}}(x_{2},x_{3})$ already has rank $0$, hence explaining the existence of an {\em exotic} path-connected component in $\Hom(\BZ^{3},\Spin(7))$. Moreover, the triple $(x_{1},x_{2},x_{3})$ is unique up to conjugation in $\Spin(7)$. This shows that the conjugation action of $\Spin(7)$ on $B_{3}$ is transitive; in particular there is a homeomorphism 
\[
B_{3}\cong \Spin(7)/Z_{\Spin(7)}(x_{1},x_{2},x_{3}).
\]
Using the work in \cite{ACG} it is easy to see that 
\[
Z_{\Spin(7)}(x_{1},x_{2},x_{3})=Z_{G_{3,2}}(x_{2},x_{3})\cong (\BZ/2)^{4}.
\] 
This shows that 
\begin{equation}\label{exotic}
B_{3}\cong \Spin(7)/(\BZ/2)^{4}\cong \SO(7)/(\BZ/2)^{3},
\end{equation}
for some embedding $(\BZ/2)^{3}\hookrightarrow \SO(7)$. Using this and Theorem \ref{main}, we see that if $\y\in \Hom(\BZ^{3},\Spin(7))$ is taken as the base point, then 
\[
\pi_{1}(\Hom(\BZ^{3},\Spin(7)),\y)=1
\]
whenever $\y\in \Hom(\BZ^{3},\Spin(7))_{\BONE}$. In contrast, if ${\bf y} \in B_{3}$ then by (\ref{exotic}) 
\[
\pi_{1}(\Hom(\BZ^{3},\Spin(7)),\y)=(\BZ/2)^{4}.
\]

\bigskip

Examples 1 and 2 show that Theorem \ref{main} may not hold if the base point of $\Hom(\BZ^{k},G)$ is no longer assumed to be in $\Hom(\BZ^{k},G)_{\BONE}$.

\bigskip

\noindent{\small Department of Mathematics, University of British Columbia, Vancouver \newline \noindent
\texttt{josmago@math.ubc.ca}}
\medskip

\noindent{\small Department of Mathematics, University of Michigan, Ann Arbor \newline \noindent
\texttt{apettet@umich.edu}}
\medskip

\noindent{\small Department of Mathematics, University of Michigan, Ann Arbor \newline \noindent
\texttt{jsouto@umich.edu}}

\end{document}